\let \theoremstyle \relax
\theoremstyle{plain}
\newtheorem{definition}{Definition}
\newtheorem{theorem}{Theorem}
\newcommand{\E}{E}
\DeclarePairedDelimiter\floor{\lfloor}{\rfloor}
\renewcommand\nomgroup[1]{%
	\item[\bfseries
	\ifstrequal{#1}{A}{Physics Constants}{%
		\ifstrequal{#1}{B}{Number Sets}{%
			\ifstrequal{#1}{C}{Other Symbols}{}}}%
	]}
\title{A multiscale, asymptotically unbiased approach to uncertainty quantification in the numerical approximation of infinite time-averaged statistics}
\author{Pooriya Beyhaghi \and Shahrouz R. Alimo \and Thomas R. Bewley}
\begin{document}
\maketitle
\begin{abstract}
Accurate assessment of uncertainty in the approximation of infinite-time-averaged statistics of statistically stationary ergodic processes (that is, of signals, obtained experimentally or computationally, that meander about some unknown mean) is a topic of significant importance in a host of engineering applications.  Among them, for example, the statistics of many turbulent flows are generally considered as stationary and ergodic after some initial transient is identified and set aside. As taking infinite time averages is not practically feasible, finite-time-averaged approximations of these statistics are generally used.  For problems in which the measured samples are independent and identically distributed (i.i.d.), the expected squared averaging error reduces only like $\sigma/\sqrt{N}$, where $\sigma$ is the standard deviation of the data.  In problems for which the measured samples are not i.i.d., convergence is even slower (eventually, like $Q/\sqrt{N}$ for $Q<\sigma$), and an uncertainty quantification (UQ) method is needed.  The present paper presents a new method to quantify the expected squared averaging error which is \emph{multiscale}, meaning that it is based on an autocorrelation model that is tuned to the data to fit the statistic of interest at a large range of different timescales.  The method is also \emph{asymptotically unbiased}, meaning that the expected squared averaging error asymptotically
converges like $Q/\sqrt{N}$ for the same value of $Q$ as the actual system, if it is modelled as a random process with the same mean, variance, and autocorrelation.  The new UQ method is tested on three representative test problems, and shown to be highly effective.
\end{abstract}

\textbf{keywords:}  Uncertainty quantification, time averaging, turbulence statistics.

\section{Introduction} \label{sec:intro}
Statistical characterizations are essential in many scientific and engineering problems.
For example, statistical measures such as turbulent kinetic energy (TKE), skin friction drag, pressure drop, and velocity correlation lengths are of fundamental importance in characterizing turbulent flowfield fluctuations in a time-averaged sense or, if the system is not statistically stationary, in an ensemble-averaged sense (without loss of generality, the present paper focuses on time averaging in the statistically stationary setting).
In practice, only a finite number of samples are available in order to approximate such statistics;
it is thus important to quantify the expected deviation between the quantity measured, obtained with a finite number of samples, and the
infinite-time-averaged statistic of interest.
This quantity is often referred to as {\it time-averaging error}, but is sometimes referred to in the turbulence literature as {\it random error} \cite{salesky-2012} or {\it sampling error} \cite{moser-2014};
for simplicity, the remainder of this paper simply calls it {\it averaging error}.

Estimation of the averaging error plays a key role in determining necessary run times in the large-eddy simulation (LES) and direct numerical simulation (DNS) of turbulent flows of engineering interest; such simulations are often extremely computationally expensive.  The importance of determining the averaging error is especially pronounced in optimization problems, such as shape optimization \cite{mardsen2004}, using derivative-free optimization approaches, as such optimization codes perform and compare many different simulations or experiments, at a variety of different sets of feasible values of the design parameters, in search of the optimum point in parameter space.  In \cite{beyhaghi-alpha}, a derivative-free optimization algorithm has been developed specifically for the efficient minimization of time-averaged statistics of the type considered in this paper.  Accurate uncertainty quantification is of fundamental importance in the effectiveness of such an approach, which adjusts the amount of sampling associated with each individual function evaluation, making function evaluations more accurate (and, thus, more expensive), as required, as convergence is approached.

As mentioned in the abstract, if the measured samples are i.i.d.~(such as thermocouple measurements, for which the noise is often well modelled as white), the standard deviation, $\varepsilon_N$, of a finite-sample approximation of an infinite-time-average statistic (that is, the ``averaging error'') is given by
\begin{subequations}
\begin{equation}  \label{eq:sigmaN}
\varepsilon_N=\sigma \sqrt{1/N},
\end{equation}
where $\sigma$ is the standard deviation of a single sample, and $N$ is the number of samples taken.  However, in most problems of interest, the measured samples are not i.i.d.; in such problems, convergence is even slower.

In a numerical simulation of a continuous-time chaotic system that is assumed to behave in a stationary ergodic manner, such as a turbulent flow, with sampling times of $t_k=k\,h$ for some sampling interval $h$, it is actually the total simulation time $T$, not the total number of samples taken $N$, that best represents the computational expense of a given measurement.  There are four main approaches available in the literature for estimating the averaging error in such problems.  The first, developed in \cite{lumley-1964}, imposes the following informative model for the standard deviation of the average over a simulation time $T$, assuming essentially continuous sampling:
\begin{equation} \label{eq:sigmaT1N}
\hat\varepsilon(T)=\sigma \sqrt{2\,\tau_f/T},
\end{equation}
\end{subequations}
where $\sigma$ is the standard deviation (from the infinite-time average) of a single sample, $\hat\varepsilon(T)$ is a model of the averaging error after time $T$, and $\tau_f$ is a modeling parameter, referred to as the {\it integral time scale}, which is introduced to model the largest decorrelation timescale of the samples of the system.  The integral time scale $\tau_f$ is studied extensively in \cite{wynngaard-1973, sreenivasan-1978}.  This model is found to be effective in practice only if the simulation time divided by the integral time scale, $T/\tau_f$, is relatively large, and is thus of specifically limited utility for UQ; regardless, it is a very revealing starting point, as discussed further in the following paragraph.

In the discrete sampling setting considered in this paper, we take $h$ as the sampling interval, and thus $N=T/h$ as the number of samples taken over a simulation of length $T$.  If $h$ is taken as so large that the samples are effectively decorrelated, \eqref{eq:sigmaN} holds; note that this relation may in fact be recovered by redefining $\tau_f=h/2$ in the discrete realization of the model given in \eqref{eq:sigmaT1N}.  As $h$ is made smaller for a given $T$ and $\tau_f$, more samples are taken, but they begin to become correlated (and, thus, do not each provide independent information).  The relation given by \eqref{eq:sigmaT1N} sets an approximate lower bound on the averaging error over a simulation of length $T$, assuming continuous sampling.
Thus, comparing \eqref{eq:sigmaT1N} and \eqref{eq:sigmaN}, taking $c/N = 2\,\tau_f/T$ where $c$ is some $\sim O(10)$ constant, and noting that $N=T/h$, reveals that
\begin{equation}\label{eq:optimal_h}
h=2\,\tau_f/c
\end{equation}
is a reasonable value for the sampling interval $h$ in a given problem; sampling substantially more frequently than this will not substantially reduce $\varepsilon(T)$, whereas sampling substantially less frequently than this provides less information, thus increasing $\varepsilon(T)$.  Note further that, if $h$ is taken as unnecessarily small (and, thus, $N$ as unnecessarily large), then the overhead associated with storing these samples may become significant, and the errors related to the finite precision of the arithmetic used may corrupt the computation of the average.  This effectively motivates one to pick an appropriate intermediate sampling interval $h$ according to \eqref{eq:optimal_h}, for which samples are indeed correlated with each other.  A well-designed UQ method, such as that designed in the present paper, is thus required to estimate the uncertainty of the approximation of the averaged statistic of interest determined from these samples.

A second approach for estimating the averaging error in such problems is to model the autocorrelation function with a simple exponential decay such that
\begin{equation}
\hat\rho(k)=\exp(- \alpha_f \, k),
\end{equation}
where $\alpha_f$ is a fitting parameter, then using this model to estimate the averaging error.  With this approach, the parameter $\alpha_f$ is determined from the available data via empirical modeling of the autocorrelation function~\cite{alimo-2016}.   
Unfortunately, the data upon which this empirical model of the autocorrelation function is built often has spurious oscillations, which can lead to inaccuracies in the estimation of the averaging error.  Filtering methods have been shown in \cite{Dias-2004, yaglom-1987, theunissen-2008, salesky-2012} to alleviate this problem somewhat, though special care is required in its implementation. 

A third approach for estimating the averaging error is known as batch mean methods (see, e.g., \cite{bernardes-2010, gluhovsky-1994, politis-2004, suarez-2002}). With such methods, the $N$ available samples are divided, in an ad hoc fasion, into $p$ non-overlapping blocks of length $n=N/p$, and the averaged values for these smaller blocks computed to generate another random process. This new process is closer to i.i.d.; the overall averaging error can then be estimated from the nominal deviations of these block averages from their overall average divided by the square root of $p$. A central challenge with this approach is the determination of the block length $n$ that works best for a given sample size $N$.

In the fourth approach for estimating the averaging error, a statistical model of the random process is imposed, where the parameters of this model are determined via a maximum likelihood formulation \cite{beran-1994}. The statistical model which is typically considered in this setting is an autoregressive moving average (ARMA) process (see, e.g., \cite{hosking-1981} and \cite{moser-2014}).
A significant challenge with this approach is the presence of systematic error (a.k.a.~``bias'') in the uncertainty quantification which does not diminish to zero as the simulation time is increased, as quantified further in \S \ref{sec:result} below.

In this paper, we present a new method for quantifying the expected squared averaging error of a finite-time-average approximation of an infinite-time-average statistic of a stationary ergodic process. The method developed (in \S \ref{sec:met}) is \emph{multiscale}, meaning that it is based on an autocorrelation model that is tuned to the data to fit the statistic of interest at a range of different timescales.  The method developed is proven in \S \ref{sec:analysis} to be \emph{asymptotically unbiased} (see Definition \ref{def:asm_unb}), meaning that the expected squared averaging error asymptotically converges like $Q/\sqrt{N}$ for the same value of $Q$ as the actual system, if it is modelled as a random process with the correct (that is, infinite-time-averaged) mean, variance, and autocorrelation.  The maximum likelihood formulation of \cite{beran-1994}, which is a leading competing UQ strategy, is shown to not satisfy this valuable property.  An automated procedure to identify the initial transient in a dataset is also reviewed. A primary application that motivates this work is turbulence research, though many other applications are also envisioned.

The structure of the remainder of the paper is as follows: 
Section \ref{sec:trans} reviews a framework to automatically identify the initial transient of a dataset.
Once this portion of the dataset is set aside, the remainder of the dataset is modeled as a realization of a stationary ergodic process.
Section \ref{sec:met} presents our new method to calculate the averaging error for the stationary part of the dataset.
Section \ref{sec:analysis} analyzes the salient properties of the new method.
Section \ref{sec:result} implements the method developed on synthetic data derived from an autoregressive (AR) model, on data derived from the  Kuramoto-Sivashinsky (KS) equation, and on data derived for a low-Reynolds number turbulent channel flow DNS at $Re_\tau=180$.
%
%
%
%
\clearpage
\subsection{Identification of the initial transient} \label{sec:trans}

In this section, we review three automated procedures to identify approximately the initial transient of a dataset.  As stated previously, this is an important first step in developing an asymptotically unbiased quantification of uncertainty of the average in the applications of interest.

The first approach identifies the smallest transient time such that, by its removal, a second-order stationarity condition is satisfied by the remainder of the dataset.  The second-order stationarity condition may be tested in two different ways:
\begin{itemize} 
\item[a.] the Priestley-Subba-Rao test \cite{priestley-1969}, which is based on a time-varying Fourier spectrum analysis, and
\item[b.] the Wavelet Spectrum test \cite{nason-2013}, which is based on a time-varying wavelet-based analysis.
\end{itemize} 
These two tests are designed to validate or invalidate the stationarity of a given random process, rather than establishing the ``degree'' of stationarity of a dataset, which is perhaps more appropriate for the problems of interest here.

The second approach determines the initial transient based on von Neumann's randomness test \cite{lada-2006, mokashi-2010, robinson-2002, tafazzoli-2011}, which uses a batch means approach which divides, in an ad hoc fashion, the $N$ available samples into $p$ non-overlapping blocks of length $n=N/p$, then analyzes the distribution of the averages of each block. Such a heuristic procedure, which is somewhat computationally expensive, often leads to acceptable results.  Indeed, in problems for which there is a specific time $t_1$ after which the state of the system is {exactly} statistically stationary\footnote{Note that this is not precisely the case in the problems of interest here, in which a continuous-time chaotic system exponentially approaches an attractor.}, the method developed in \cite{awad-2006} is shown to identify $t_1$ correctly in the limit that the simulation time $T$ goes to infinity. 

The third approach, which is implemented in the present work and is computationally quite inexpensive, was originally introduced in \cite{White-1997, White-2000, hoad-2008} for numerical simulations in systems engineering and finance. This approach is well-suited for calculating an unbiased estimate of the infinite time-averaged value of a statistic, as it is specifically designed to find an estimate of the average with minimum uncertainty.  Take $\{X_1, X_2, \dots, X_N\}$ as a dataset modelled as a realization of a random process with $N$ samples; the initial transient of this dataset is estimated via this approach by solving the following optimization problem:
\begin{equation}\label{eq:trans}
\hat k = \underset{1 \leq k \leq {\frac{N}{2}}}{\text{argmin}} \ \frac{1}{(N-k-1)^2} \sum_{i=k+1}^N(X_i-\bar{X}_{k,N})^2\ \ 
\text{where} \ \ 
\bar{X}_{k,N}=\frac{1}{N-k} \sum_{i=k+1}^N X_i.
\end{equation}
That is, this approach selects the number of initial samples $\hat k$ to set aside in order to minimize an estimate of $\sigma^2/(N-\hat k)$, which is (within a multiplicative constant) an estimate of the squared uncertainty, $\varepsilon_{N-\hat k}^2$, of the averaged value of the remainder of the dataset.  This optimization problem can be solved in $O(N^2)$ flops using a brute force method, or with $O(N)$ flops using a more advanced optimization algorithm (see, e.g., \cite{beyhaghi-2015, beyhaghi-2016}).

\clearpage
\section{Estimation of the averaging error}\label{sec:met}
In this section, we present a new multiscale technique to estimate the averaging error of a dataset modelled as a stationary ergodic random process $x_i$.  Define an additional random variable, referred to as the {\it sample mean} $y_s$, such that
\begin{gather*}
y_s= \frac{1}{s}\sum_{i=1}^s x_i,
\end{gather*}
and identify the {\it mean} $\mu$, {\it variance} $\sigma^2$, and {\it autocorrelation function} $\rho(k)$ such that
\begin{equation}\label{eq:mean,variance,autocorr}
\mu = \E [x_i] = \E[y_s], \quad
\sigma^2 = \E[(x_i-\mu)^2], \quad
\rho(k)=\frac{\E[(x_i-\mu) (x_{i+k}-\mu)]}{\sigma^2}.
\end{equation}
The {\it expected squared averaging error}, defined as the expected value of the square of the deviation of the sample mean $y_s$ from the true mean $\mu$, is given (see \S 1 of \cite{beran-1994}) by
\begin{equation} \label{eq:sigmaTrhok}
\varepsilon^2_s =\E[(y_s-\mu)^2]
=\frac{\sigma^2 }{s} \Big[1 + 2 \, \sum \limits_{k=1}^{s-1}  (1 - \frac{k}{s}) \, \rho(k) \Big].
\end{equation} 
Note that $\varepsilon^2_s\rightarrow 0$ as $s\rightarrow \infty$, as the process is ergodic, and that
\eqref{eq:sigmaTrhok} reduces to \eqref{eq:sigmaN} in the limit that the samples are i.i.d.
It follows immediately from \eqref{eq:sigmaTrhok} that
\begin{equation} \label{eq:MSanal}
	\E[y_s^2]=\mu^2 + \varepsilon_s^2 = \mu^2 +
	\frac{\sigma^2}{s}\, \Big[1 + 2 \, \sum \limits_{k=1}^{s-1}  (1 - \frac{k}{s}) \, \rho(k) \Big].
\end{equation}

Now consider a sequence of $N$ statistically stationary random variables, $x_1$ to $x_N$. An unbiased estimate of $\mu$ can be developed from this sequence leveraging the following definition of the {\it shifted sample means}
\begin{subequations}\label{eq:M}
\begin{equation}\label{eq:Ms}
m_{\ell,\ell+s}=\frac{1}{s}\sum_{i=\ell+1}^{\ell+s} x_i
\quad \textrm{for} \quad
\ell=0,1,\ldots,N-s,
\end{equation}
each of which is considered as a random variable with a distribution identical to that of $y_s$.  Note that the shifted sample means are not independent.  In practice, in the spirit of a batch means method, we will consider only those shifted sample means $m_{\ell,\ell+s}$ corresponding to non-overlapping blocks such that $\ell\in L_s=\{0, s, 2\,s, \dots, (p_s-1)\, s\}$ where $p_s= \floor{\frac{N}{s}}$. 
Define also the {\it mean-squared shifted sample mean}, $\bar{m}^2_s$, as the mean-squared value of $m_{\ell,\ell+s}$ for these $p_s$ nonoverlapping blocks,
\begin{equation}\label{eq:hatMs}
  \bar{m}^2_s = \frac{1}{p_s} \sum_{\ell\in L_s} m^2_{\ell,\ell+s};
\end{equation}
\end{subequations}
the random variable $\bar{m}^2_s$ has the same expected value as $y^2_s$, but reduced variance.

We will denote\footnote{That is, random variables in this work are indicated by lowercase letters, and corresponding realizations of these random variables are indicated by uppercase letters.} $\{X_1,X_2,\dots,X_N\}$ as a dataset modelled as a realization, of length $N$, of the random process $x_i$ described above. Corresponding realization values of the sample mean, the shifted sample means, and the mean-squared shifted sample mean are denoted by $Y_s$, $M_{\ell,\ell+s}$, and $\bar{M}^2_s$, respectively.  The value of $\bar{M}^2_s$ computed from this dataset provides an estimate of the expected mean of $y^2_s$ that is accurate for values of $s$ that are small enough that there are several blocks to average over; we thus only consider in this work the mean-squared shifted sample mean $\bar{m}^2_s$, and its realization value $\bar{M}^2_s$, for $s \le q_N$ where $q_N=\floor{\sqrt{N}}$.


We now define a model quantity $\hat\varepsilon^2_s$, in an analogous form as the expected squared averaging error $\varepsilon^2_s$ in \eqref{eq:sigmaTrhok}, such that
\begin{equation} \label{eq:sigmaTrhok1}
\hat\varepsilon^2_{s}
=\frac{\hat\sigma^2}{s} \Big[1 + 2 \, \sum \limits_{k=1}^{s-1}  (1 - \frac{k}{s}) \, \hat\rho(k;\hat\theta) \Big],
\end{equation} 
where $\hat\sigma$ is an model (i.e., an estimate) of the variance $\sigma$, and $\hat\rho(k;\hat\theta)$ is a model\footnote{Models of the autocorrelation function of various statistics of interest, in a number of chaotic systems of interest, have been studied broadly (e.g., autocorrelations of some statistics in turbulent flows are discussed in \cite{wynngaard-1973,kim1987turbulence}). The autocorrelation model  given in \eqref{eq:automodel} is typical in such studies.} of the autocorrelation function $\rho(k)$, with its adjustable model parameters assembled into the vector $\hat\theta$:
\begin{equation}\label{eq:automodel}
\hat\rho(k;\hat\theta)=\sum_{i=1}^m \hat A_i \, \hat\tau_i^k
\quad \textrm{where} \quad
\hat\theta=[\hat A_1, \hat A_2, \dots, \hat A_m, \hat\tau_1, \hat\tau_2, \dots, \hat\tau_m],
\end{equation}
where the feasible domain $\Omega$ for the $\hat\theta$ parameters is the linearly constrained domain
\begin{equation}\label{eq:Omega}
0\le \hat\tau_i < 1, \quad \sum_{i=1}^m \hat A_i=1.
\end{equation}

We now denote, by $\{\hat\theta_N,\hat\sigma_N,\hat\mu_N\}$, optimized values of $\{\hat\theta,\hat\sigma,\hat\mu\}$ based on the sequence $\{x_1,\ldots,x_N\}$ of length $N$.
These optimized values are determined by solving the following optimization problem:
\begin{subequations}\label{eq:modelfit}
\begin{gather}
\{\hat\theta_N,\hat\sigma_N,\hat\mu_N\}=
\textrm{argmin}\, f(\hat\theta,\hat\sigma,\hat\mu)= \sum_{s=1}^{q_N} \big[g_{s}(\hat\theta,\hat\sigma,\hat\mu)\big]^2 \quad \textrm{where} \label{eq:modelfit;a}\\
g_{s}(\hat\theta,\hat\sigma,\hat\mu) =  \hat\mu^2+ \frac{\hat\sigma^2 }{s} \Big[1 + 2 \, \sum \limits_{k=1}^{s-1}  \big(1 - \frac{k}{s}\big) \, \hat\rho(k;\hat\theta)\Big]- \bar{m}_s^2, \label{eq:modelfit;b}
\end{gather}
\end{subequations}
where $\bar{m}_s$ is derived from the sequence $\{x_1,\ldots,x_N\}$ via \eqref{eq:M}, while imposing that the last term in the sum in \eqref{eq:modelfit;a} vanishes, $g_{q_N}(\hat\theta_N,\hat\sigma_N,\hat\mu_N)=0$, in addition to the feasibility of the parameters of the autocorrelation model, $\hat\theta\in\Omega$ [see \eqref{eq:Omega}], as well as $\hat\sigma\ge 0$ and $\hat\mu\ge 0$.  In other words, we seek to find the best model parameters, $\{\hat\theta_N,\hat\sigma_N,\hat\mu_N\}$, such that the expression for $\E[y_s^2]$ in \eqref{eq:MSanal}, leveraging the model values $\hat\mu$ and $\hat\sigma$ and the model of the autocorrelation in \eqref{eq:automodel}, $\hat\rho(k;\hat\theta)$, exactly matches the unbiased estimate $\bar m_s^2$ of $y_s^2$ at $s=q_N=\floor{\sqrt{N}}$, while the sum of the squares of the mismatch of these quantities over all the batch lengths $1\le s <q_N$ is minimized.  That is, the tuning of the available parameters in the model, $\{\hat\theta,\hat\sigma,\hat\mu\}$, is performed in such a way as to accurately match the model [given in \eqref{eq:sigmaTrhok1}] of the expected squared averaging error [given by \eqref{eq:sigmaTrhok}], at a range of different timescales $s$, to the information available in the sequence $\{x_1,\ldots,x_N\}$; we thus refer to the approach developed as a {\it multiscale} fit.

In this work, for any given realization of the sequence, $\{X_1,\ldots,X_N\}$, the optimization problem defined by \eqref{eq:modelfit} is solved using SNOPT \cite{gill-2005}, which is an advanced Sequential Quadratic Programming (SQP) method.
Though the application of such a solver to a problem of this form is entirely straightforward, the optimization problem given in \eqref{eq:modelfit} is nonconvex, and thus SNOPT might only find a local minimum.  The resulting framework for estimating the averaging error is summarized in Algorithm \ref{algorithm:uq}.

\begin{algorithm}[t!]
	\caption{Estimation of the expected averaging error $\varepsilon^2_N=\E [(y_N-\mu)^2\}$ of a set of data $\{X_1,X_2,\dots,X_N\}$ modelled as a realization of a stationary random process $\{x_1,x_2,\dots,x_N\}$.}
	\label{algorithm:uq}
	\begin{algorithmic}[1]
		\For{each $s \in \{1,2, \dots, \floor{\sqrt{N}}\}$}
		\State Calculate $M_{\ell,\ell+s}$ via \eqref{eq:Ms} for all $\ell \in L=\{0,s, 2\, s, \dots, (\floor{\frac{N}{s}}-1) s\}$.
		\State Compute the mean-squared value $\bar M_s^2$ of the $M_{\ell,\ell+s}$ for all $\ell \in L$ via \eqref{eq:hatMs}.
		\EndFor
		\State Solve the optimization problem \eqref{eq:modelfit} to find the optimized model parameters $\{\hat\theta_N,\hat\sigma_N,\hat\mu_N\}$. 
		\State Generate an estimate of the averaging error, $\hat\varepsilon^2_N$, with \eqref{eq:sigmaTrhok1}, using the autocorrelation model given by $\hat\rho(k;\hat \theta)$ in \eqref{eq:automodel}, implementing the optimized parameter values $\{\hat\theta_N,\hat\sigma_N,\hat\mu_N\}$ determined in step 5.
	\end{algorithmic}
\end{algorithm}

Analytical expressions for the derivatives of $f(\hat\theta,\hat\sigma,\hat\mu) $ and $g_s(\hat\theta,\hat\sigma,\hat\mu)$ are useful in the optimization process.
The derivatives of the these functions are given as follows:
\begin{gather*}
\nabla f(\hat\theta,\hat\sigma,\hat\mu) =  2\, \sum_{s=1}^ {}  \, g_s(\hat\theta,\hat\sigma,\hat\mu) \, \nabla g_s(\hat\theta,\hat\sigma,\hat\mu), \\
\frac{\partial g_s(\hat\theta,\hat\sigma,\hat\mu) }{\partial \hat\mu}= 2\,\hat\mu, \quad \frac{\partial g_s(\hat\theta,\hat\sigma,\hat\mu) }{\partial \hat\sigma}= \frac{2\, \hat\sigma}{s} \Big[1+2\, \sum \limits_{k=1}^{s-1}  (1 - \frac{k}{s}) \, \sum_{i=1}^m \hat A_i \hat\tau_i^k \Big], \\ 
\frac{\partial g_s(\hat\theta,\hat\sigma,\hat\mu) }{\partial \hat A_i}= \frac{\hat\sigma^2}{s} \Big[2 \, \sum \limits_{k=1}^{s-1}  (1 - \frac{k}{s}) \, \hat\tau_i^k \Big], \quad 1 \leq i \leq m, \\ 
\frac{\partial g_s(\hat\theta,\hat\sigma,\hat\mu)}{\partial \hat\tau_i}= \frac{\hat\sigma^2}{s} \Big[2 \, \sum \limits_{k=1}^{s-1}  \hat A_i \, k \, (1 - \frac{k}{s})\, \hat\tau_i^{k-1} \Big], \quad 1 \leq i \leq m.
\end{gather*}

\section{Analysis of the estimator}\label{sec:analysis}

We now analyze various properties of the new estimation technique, presented in \S \ref{sec:met} and summarized in Algorithm \ref{algorithm:uq}, applied to a stationary random process $\{x_1,x_2,\dots\}$.  The mean $\mu$, variance $\sigma^2$, and autocorrelation $\rho(k)$ of the random process $x_i$ considered are defined as in \eqref{eq:mean,variance,autocorr}.

If the autocorrelation function $\rho(k)$ is summable, then it follows from \eqref{eq:sigmaTrhok} that the expected squared averaging error, $\varepsilon^2_s$, approaches zero like the reciprocal square root of $s$ times a constant $Q$, that is,
\begin{equation} \label{eq:summableq}
\lim_{s \rightarrow \infty } s\, \varepsilon_s^2 = Q
\quad \textrm{for finite } Q.
\end{equation}
It is natural to seek a UQ method that satisfies the same property; this notion is made precise by the following definition and theorem.
\vskip0.05in

\begin{definition}\label{def:asm_unb}
The random process $a_s$
is called an \emph{asymptotically  unbiased} estimate of the expected squared averaging error $\varepsilon_s$ if
\begin{equation}
\lim_{s \rightarrow \infty} s\,  (\E[a_s]-  \varepsilon_s^2) =0.
\end{equation}
\end{definition}
\begin{theorem}
The random process $\hat\varepsilon_s^2$ in \eqref{eq:sigmaTrhok1}, the $N$'th element of which is obtained by implementing Algorithm \ref{algorithm:uq} on the first $N$ elements of the random process $x_i$, provides an asymptotically unbiased estimate of $\varepsilon_s^2$ in \eqref{eq:sigmaTrhok}.
\end{theorem}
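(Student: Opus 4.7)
The plan is to reduce the stated unbiasedness to a match in expectation of a single asymptotic constant, and then to exploit the equality constraint built into \eqref{eq:modelfit} to pin that constant down.

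Under summability of $\rho(k)$, equation \eqref{eq:sigmaTrhok} gives $N\varepsilon_N^2 \to Q := \sigma^2\bigl(1+2\sum_{k\ge 1}\rho(k)\bigr)$. For the model autocorrelation $\hat\rho(k;\hat\theta)=\sum_i \hat A_i\,\hat\tau_i^k$, the elementary identity $\sum_{k=1}^{s-1}(1-k/s)\hat\tau_i^k = \hat\tau_i/(1-\hat\tau_i)+O(1/s)$, uniform on any compact subset of $[0,1)$, yields $s\,\hat\varepsilon_s^2 = \hat Q_N + O(1/s)$, where
\[
\hat Q_N \;:=\; \hat\sigma_N^{\,2}\Bigl(1+2\sum_{i=1}^m \hat A_i^{(N)}\hat\tau_i^{(N)}/(1-\hat\tau_i^{(N)})\Bigr).
\]
Evaluating at $s=N$ and subtracting from $N\varepsilon_N^2$, the theorem reduces to proving $\lim_{N\to\infty}\E[\hat Q_N]=Q$.

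Next I would exploit the equality constraint $g_{q_N}(\hat\theta_N,\hat\sigma_N,\hat\mu_N)=0$ enforced at the anchor scale $s=q_N=\lfloor\sqrt N\rfloor$. From \eqref{eq:modelfit;b} together with the asymptotic expansion just derived,
\[
\hat\mu_N^{\,2} + \hat Q_N/q_N + O(1/q_N^{2}) = \bar m_{q_N}^{\,2}.
\]
Taking expectations, using $\E[\bar m_{q_N}^{2}] = \mu^2+\varepsilon_{q_N}^{2} = \mu^2+Q/q_N+O(1/q_N^{2})$ from \eqref{eq:MSanal}, multiplying through by $q_N$ and rearranging gives
\[
\E[\hat Q_N]-Q \;=\; q_N\bigl(\mu^2-\E[\hat\mu_N^{2}]\bigr)+o(1).
\]
Asymptotic unbiasedness therefore reduces to the concentration estimate $\E[\hat\mu_N^{2}]-\mu^2 = o(1/\sqrt N)$.

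To close the argument I would tie $\hat\mu_N$ to the empirical sample mean $\bar X_N$ using the soft constraints in \eqref{eq:modelfit;a}. The optimizer drives each $g_s$ small; for an intermediate scale $s$ that is large enough that $\hat\varepsilon_s^{\,2}$ is already negligible but still $s\le q_N$, this forces $\hat\mu_N^{\,2}\approx \bar m_s^{\,2}$, and an appropriate weighted combination of these near-equalities is essentially $\bar X_N^{\,2}$. Since $\E[\bar X_N^{\,2}]=\mu^2+\varepsilon_N^{\,2}=\mu^2+O(1/N)=\mu^2+o(1/\sqrt N)$, the required bound $\E[\hat\mu_N^{\,2}]-\mu^2 = o(1/\sqrt N)$ follows, completing the argument.

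The main obstacle is the rigorous execution of this last step. The optimization in \eqref{eq:modelfit} is nonconvex, the true $\rho$ need not lie in the model class, and nothing a priori prevents the fitted $\hat\tau_i^{(N)}$ from drifting toward the boundary $1$, which would invalidate the geometric-tail estimate used to identify $\hat Q_N$ in the first step. A careful treatment therefore needs compactness/boundedness estimates on the minimizer $(\hat\theta_N,\hat\sigma_N,\hat\mu_N)$, together with a perturbation argument around the ``population'' version of \eqref{eq:modelfit} (with $\bar m_s^{\,2}$ replaced by its mean $\mu^2+\varepsilon_s^{\,2}$), using an $L^2$ ergodic rate for $\bar m_s^{\,2}$ to transfer into the desired $o(1/\sqrt N)$ concentration of $\hat\mu_N$ around $\mu$.
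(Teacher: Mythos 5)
Your proposal follows essentially the same route as the paper's proof: summability of the model autocorrelation gives $s\,\hat\varepsilon_{s,N}^2\to\hat Q_N$, the equality constraint $g_{q_N}=0$ at the anchor scale $q_N=\lfloor\sqrt N\rfloor$ combined with the unbiasedness of $\bar m_{q_N}^2$ is taken in expectation and multiplied by $q_N$, and the claim reduces to $q_N\bigl(\E[\hat\mu_N^2]-\mu^2\bigr)\to 0$ together with $\E[\hat Q_N]\to Q$. The one step you single out as the main obstacle --- establishing the concentration $\E[\hat\mu_N^2]-\mu^2=o(1/\sqrt N)$ and keeping the fitted $\hat\tau_i$ away from $1$ so the limit defining $\hat Q_N$ is uniform --- is precisely the point the paper dispatches with only the assertion that ``$Q$ and $\hat Q_N$ are bounded,'' which by itself yields $\E[\hat\mu_N^2]\to\mu^2$ but not the rate needed to split the two terms; so your more cautious accounting identifies a real thinness in the published argument rather than a defect peculiar to your own.
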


\begin{proof}
The model of the autocorrelation function \eqref{eq:automodel}, with parameters as optimized by Algorithm \ref{algorithm:uq}, is necessary summable (since the $\tau_i<1$ for all $i$). Denote $\hat\varepsilon_{s,N}^2$ as the model, given by \eqref{eq:sigmaTrhok1}, of the expected squared averaging error over a sequence of length $s$, implementing the optimized parameters $\{\hat\theta_N,\hat\sigma_N,\hat\mu_N\}$ derived from a sequence of length $N$.  It follows that
\begin{equation} \label{eq:summableqt}
\lim_{s \rightarrow \infty } s \, \hat\varepsilon_{s,N}^2 = \hat Q_{N},
\quad \textrm{for finite } \hat Q_N.
\end{equation}
By construction, the parameters $\hat\theta_N,\hat\sigma_N,\hat\mu_N$, $\hat Q_N$, and $\hat\varepsilon_{q_N,N}$ are random variables, as they are derived from the random process $x_i$. 
Moreover, these variables are obtained by solving the optimization problem \eqref{eq:modelfit}; therefore, the equality constraint of the optimization problem, $g_{q_N}(\hat\theta,\hat\sigma,\hat\mu)=0$ where $q_N=\floor{\sqrt{N}}$, must be satisfied:
\begin{equation} \label{eq:sigmatmhat}
\hat\mu_N^2+   \hat\varepsilon_{q_N,N}^2- \bar m_{q_N,N}^2=0.
\end{equation}
Since $\bar m_{q_N,N}^2$ is unbiased, $\E[\bar m_{q_N,N}^2]=\mu^2 +\varepsilon_{q_N}^2$.
Taking the expected value of \eqref{eq:sigmatmhat}, it follows that
\begin{equation}
\E[\hat \mu_N^2] + \E[\hat \varepsilon_{q_N,N}^2]
-\mu^2-\varepsilon^2_{q_N}=0.
\end{equation}
Multiplying the above equation by $q_N$ and rearranging gives
\begin{equation}
q_N(\E[\hat\mu_N^2]-\mu^2)+
(\E[q_N \hat \varepsilon_{q_N,N}^2]-q_N \varepsilon_{q_N}^2)=0.
\end{equation}
Thus, taking the limit as $N\rightarrow\infty$ (and, therefore, $q_N\rightarrow\infty$) and substituting \eqref{eq:summableq} and \eqref{eq:summableqt}, it follows that
\begin{equation*}
\lim_{N \rightarrow \infty } q_N (\E[\hat \mu_N^2]-\mu^2)+ \E[\hat Q_N]-Q=0.
\end{equation*}
Since $Q$ and $\hat Q_N$ are bounded,
\begin{gather*}
\lim_{N \rightarrow \infty } \E[\hat \mu_N^2]=\mu^2, \quad \lim_{N \rightarrow \infty } \E[\hat Q_N]=Q, \notag\\
\lim_{N \rightarrow \infty} N(\E[\hat\varepsilon_N^2]-\varepsilon_N^2)= \lim_{N \rightarrow \infty} \E[\hat Q_N]-Q=0. \qedhere
\end{gather*}

\end{proof}

We have thus established that the implementation of Algorithm \ref{algorithm:uq} on any realization of a stationary random process $x_i$ results in an unbiased estimate of the averaging error. This is a valuable property of the present method for estimating the averaging error, as it implies that the uncertainty quantification does not have any systematic error in the limit of large $N$. 
Note that certain other leading methods for uncertainty quantification, such as that developed in \cite{beran-1994}, based on a maximum likelihood formulation, do not share this valuable property. 

\clearpage
\section{Numerical simulations} \label{sec:result}

We now apply the algorithm developed in section \ref{sec:met} for estimating the averaging error to three different datasets generated as follows:
\begin{itemize} 
\item[1.] A synthetic autoregressive process of order six, AR(6). 
\item[2.] Statistics of the kinetic energy of the Kuramoto-Sivashinsky (KS) equation.
\item[3.] Statistics of the TKE from a DNS of a turbulent channel flow at $Re_{\tau}=180$. 
\end{itemize}

For the purpose of comparison, in all three cases, the averaging error is estimated using the maximum likelihood approach.

Also, for comparison, the expected squared averaging error $\varepsilon_s^2$, given by \eqref{eq:sigmaTrhok}, is calculated based on accurate values of $\mu$, $\sigma^2$, and $\rho(k)$. For the dataset generated by AR(6), the true values of $\mu$, $\sigma^2$, and $\rho(k)$ are available, so the true value of $\varepsilon_s^2$ is directly computable. 

On the other hand, since analytical expressions for $\mu$, $\sigma^2$, and $\rho(k)$ are not available in KS and NSE cases, a ``truth model'' for the UQ of the these datasets are found based on the data available.  To develop a ``truth model'' for the UQ of KS dataset, we simply approximate the autocorrelation, $\rho(k)$, from a very large $N$ (at least 100 times larger than the maximum value of $N$ considered in the plots) and estimate the expected squared averaging error $\varepsilon_s^2$, given by \eqref{eq:sigmaTrhok} and approximated values of $\mu$, $\sigma^2$, and $\rho(k)$. Moreover, to develop a ``truth model'' for the UQ of the NSE dataset, since direct approximation of $\rho(k)$ is computationally intractable, we simply apply the algorithm developed above for very large $N$ (at least 30 times larger than the maximum value of $N$ considered in the plots).  This approach provides a very large number of samples to average over when approximating $\varepsilon_s^2$ numerically.

\subsection{Autoregressive model}\label{sec:ARmodel}

We first apply the new UQ method developed in \S \ref{sec:met} to a dataset generated by an autoregressive (AR) model of the general form
\begin{equation}\label{eq:AR6}
x_i=\sum_{k=1}^{n} \alpha_k \, x_{i-k}+\epsilon_i;
\end{equation}
for the present study, we take $\epsilon_i=\mathcal{N}(0,\sigma^2_\epsilon)$.  After an initial transient (related to the specified $n$ initial values of $x_i$) has passed, this system is statistically stationary, with a mean of $\mu=0$ and, defining the unnormalized autocorrelation function $\gamma(k)=\sigma^2\,\rho(k)=\E\{(x_i-\mu) (x_{i+k}-\mu)\}$, the values of $\gamma(k)$ related (see \cite{beran-1994}) by
\begin{subequations}\label{eq:ARstats}
\begin{align}
\gamma(0)&=\sum_{j=1}^{n} \alpha_j \gamma(j) + \sigma^2_\epsilon,\label{eq:ARstats;a} \\
\gamma(h)&=\sum_{j=1}^{n} \alpha_j \gamma(h-j) \quad \textrm{for} \quad h>0.\label{eq:ARstats;b}
\end{align}
\end{subequations}
Noting that $\gamma(k)=\gamma(-k)$, the first $n+1$ values of $\gamma$ may be determined by solving the linear system of equations, known as the Yule-Walker equations, given by \eqref{eq:ARstats;a} together with \eqref{eq:ARstats;b} for $h=1,\ldots,n$; additional values of $\gamma(k)$ are then given directly by \eqref{eq:ARstats;b}.
Note further that $\sigma^2=\gamma(0)$, and $\rho(k)=\gamma(k)/\sigma^2$.

In the simulations reported here, we consider an AR(6) process (that is, we take $n=6$) with $\epsilon_i=\mathcal{N}(0,\sigma^2_\epsilon)$ where $\sigma^2_\epsilon=0.1$, and
\begin{gather*}
\alpha_1=3.1378,\ \alpha_2=-3.9789,\ 
\alpha_3=2.6788,\ \alpha_4=-1.0401,\ 
\alpha_5=0.2139,\ \alpha_6=-0.0133.
\end{gather*}
The poles of the difference equation corresponding to this AR(6) model are given by
\begin{equation*}
\big\{0.1,\ 0.95,\ 0.8,\ 0.7,\ 0.25+{\sqrt{3}}/{2},\ 0.25-{\sqrt{3}}/{2}\big\}.
\end{equation*}
After the initial transient has passed, the system in \eqref{eq:ARstats} reveals that the standard deviation $\sigma=24.97$, and that the autocorrelation function $\rho(k)$, for $k=0,1,2,\ldots$, is:
\begin{equation*}
\rho(k)=\{1,\ 0.9967,\ 0.9870,\ 0.9716 ,  \ 0.9516,\ 0.9277,\ 0.9010,\ 0.8722,\ 0.8418,\ldots\}.
\end{equation*}
The typical behavior of the AR(6) model described above is illustrated in Figure \ref{fig:avg_AR}.

\begin{figure}
    \centerline{
    \begin{subfigure}[b]{2.7in}
    \includegraphics[width=2.7in]{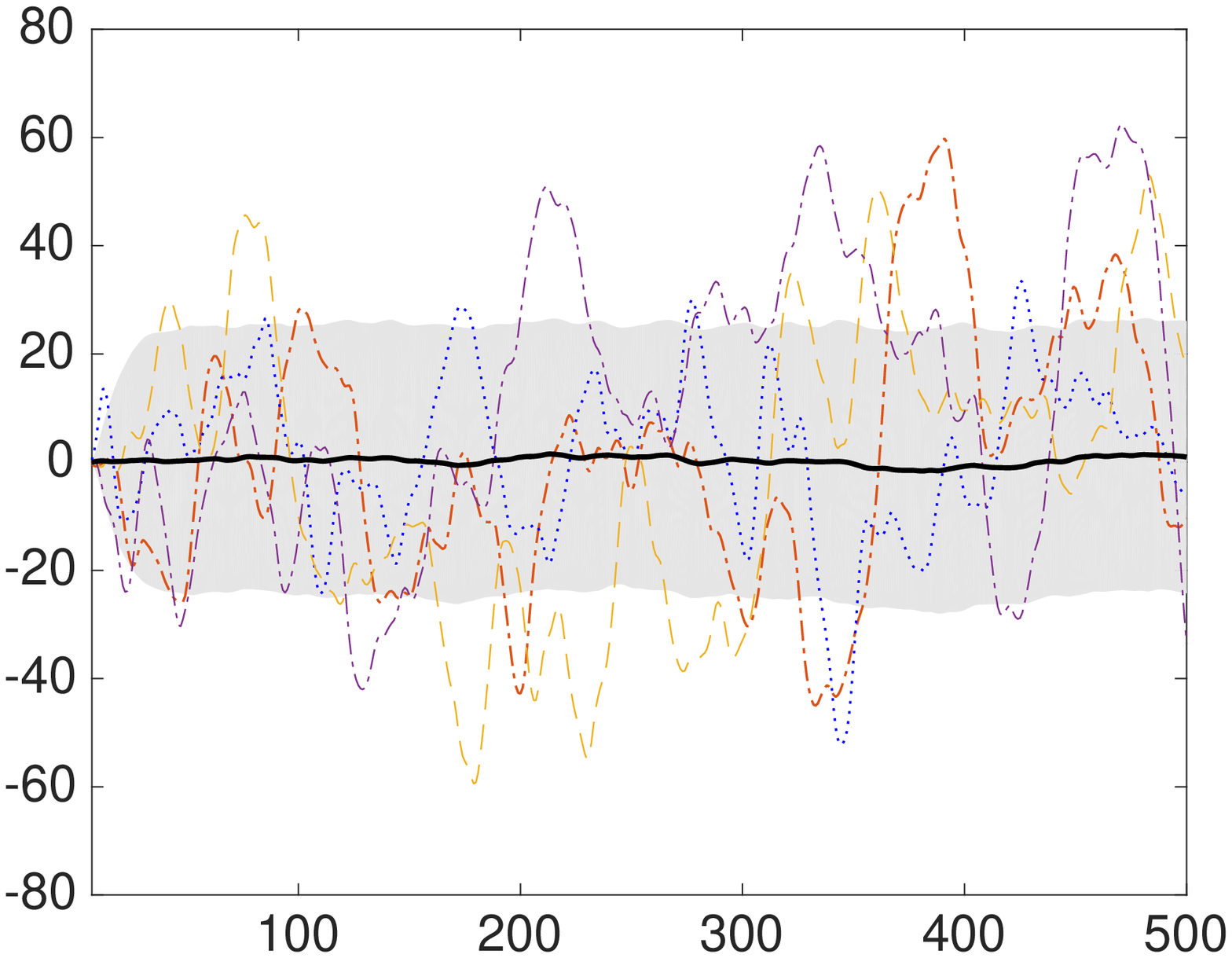}\vskip-0.10in
    \label{fig:mean_signal_0}
    \caption{Initialization: $x_{-5}=\ldots=x_0=0$.}
        \end{subfigure}
    \begin{subfigure}[b]{2.7in}
    \includegraphics[width=2.7in]{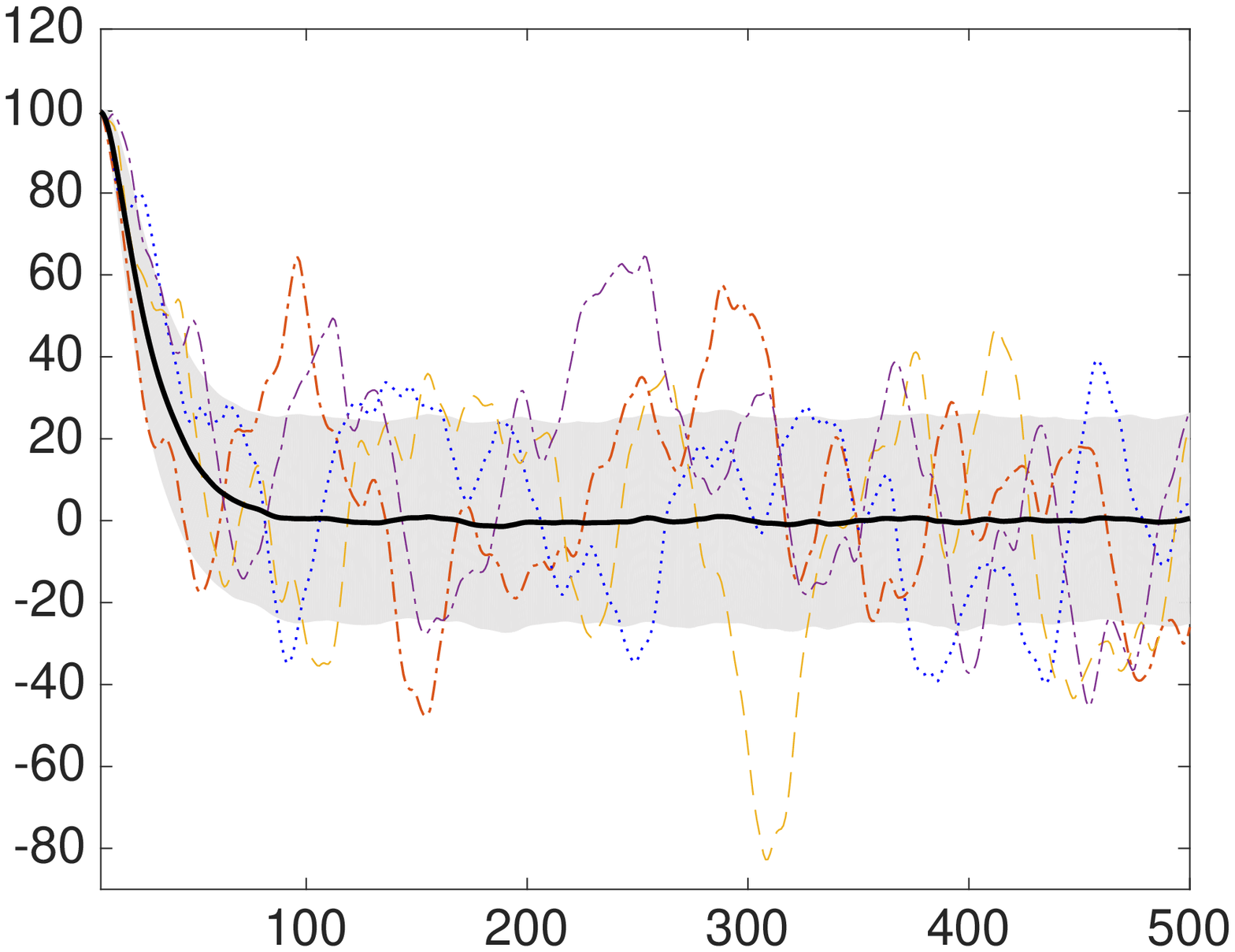}\vskip-0.10in
    \label{fig:mean_signal_100}
    \caption{Initialization: $x_{-5}=\ldots=x_0=100$.}
        \end{subfigure}}\vskip-0.1in
\caption{Simulation of the AR(6) model described in \S \ref{sec:ARmodel}, evolving away from different initial values of $x_i$ as indicated. 
(dot-dashed lines) Representative realizations. (solid line) Ensemble mean and (gray region) ensemble mean $+/-$ ensemble variance, computed over 1000 ensemble members.}
	\label{fig:avg_AR}
        
\centerline{
    \begin{subfigure}[b]{2.7in}
    \includegraphics[width=2.5in]{HIST_0_AR}\label{fig:HIS_TR_AR_0}\caption{Initialization as in Figure \ref{fig:avg_AR}a.}
        \end{subfigure}
    \begin{subfigure}[b]{2.7in}
    \includegraphics[width=2.5in]{HIST_100_AR}\label{fig:HIS_TR_AR_100}\caption{Initialization as in Figure \ref{fig:avg_AR}b.}
        \end{subfigure}}\vskip-0.1in
   \caption{Histogram of the transient time estimates (see \S \ref{sec:trans}) for the AR(6) model described in \S \ref{sec:ARmodel}, computed over 1000 ensemble members.}
	\label{fig:trans_AR}
\end{figure}

Figure \ref{fig:trans_AR} illustrates the behavior of the transient time estimation method reviewed in \S \ref{sec:trans} on the ensemble of 1000 simulations summarized in Figure \ref{fig:avg_AR}.  Initialization with $x_{-5}=\ldots=x_0=0$, as illustrated in Figure \ref{fig:trans_AR}a, shows that, for about 75\% of the ensemble members considered, the minimization problem given by \eqref{eq:trans} results simply in $\hat k=1$.  This is entirely to be expected, as the transient estimation method implemented in this work is {\it not} based on a second-order stationarity condition, but rather simply on the minimization of the average squared value of $(X_i-\bar X)$ over the remaining samples.  Thus, even though the AR(6) system is clearly not statistically stationary in the first few samples in this case, the particular transient estimation method implemented is insensitive to this fact.

Initialization with $x_{-5}=\ldots=x_0=100$, as illustrated in Figure \ref{fig:trans_AR}b, shows a much more typical behavior of the transient time estimation method selected for the problems of interest in this work.  In this case, the minimization problem given by \eqref{eq:trans} results in an average value of $\hat k=40$, which is very nearly the value one would select by eye given the (very significant) advantage of hindsight, as embodied by the ensemble average results depicted in Figure \ref{fig:avg_AR}b.  This is indeed remarkable, as each transient time calculation is based solely on an individual ensemble member, each of which has significant random fluctuations associated with it (see the dot-dashed curves of Figure \ref{fig:avg_AR}b).

Figure \ref{fig:ar_test} illustrates the performance of Algorithm \ref{algorithm:uq} on an ensemble of $30$ datasets obtained via simulation of the AR(6) model described above, taking $x_{-5}=\ldots=x_0=0$ and $\hat k=0$, with realization lengths of $N=\{2^{7:14}\}$.  For the purpose of comparison, we have also estimated the averaging error using the maximum likelihood approach applied to an AR(3) model; to facilitate a fair comparison, the same number of model parameters is used for both methods.  We also compare with the expected squared averaging error given by \eqref{eq:sigmaTrhok}, with the exact formulae for $\mu$, $\sigma$, and $\rho$ as determined by solution of \eqref{eq:ARstats}.

It is clearly evident for moderate to large realization lengths, $N\gtrsim 2000$, that the performance of the uncertainty quantification method given by Algorithm \ref{algorithm:uq} is remarkably better than that given by the MLE-based approach.  In particular, it is distinctly evident that the estimates given by Algorithm \ref{algorithm:uq} are asymptotically unbiased (see Definition \ref{def:asm_unb}), whereas the estimates generated by the MLE-based approach are not (as the underlying AR(3) model used in the MLE-based approach can not entirely capture the dynamics of the AR(6) system).  For small realization lengths $N$, the performance of the estimators are similar.

\begin{figure}
	\centering
	\includegraphics[width=1\columnwidth]{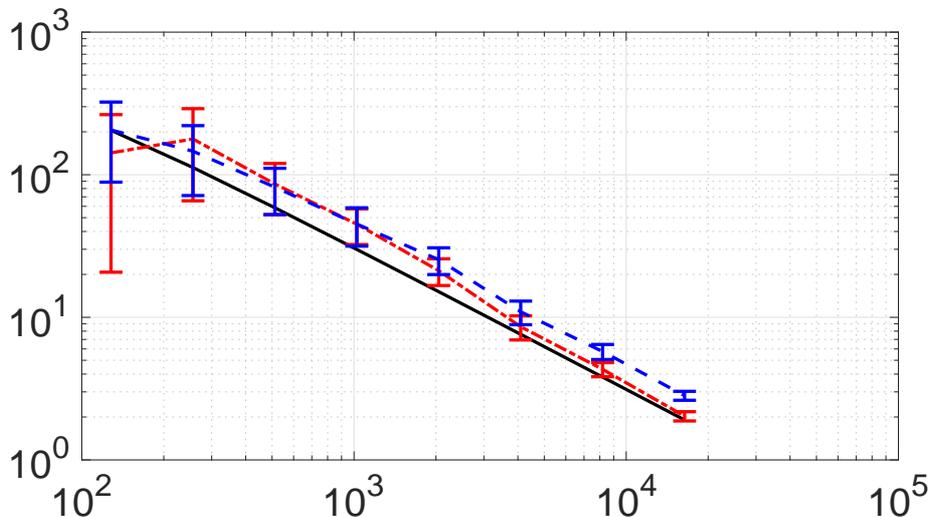}\vskip-0.01in
	\caption{Implementation of Algorithm \ref{algorithm:uq} and the MLE-based UQ approach on an ensemble of 30 simulations of the AR(6) model described in \S \ref{sec:ARmodel}. (horizontal axis) Averaging length $N$, (vertical axis) averaging error $\varepsilon_N$, (red dotted-dashed line) Ensemble average, (red error bars) ensemble variance of the estimate of the averaging error given by Algorithm \ref{algorithm:uq}, (blue dashed line) Ensemble average and (blue error bars) ensemble variance of the estimate of the averaging error given by the MLE-based approach, using an AR(3) model, (solid black line) actual averaging error, based on \eqref{eq:sigmaTrhok} with the true values of $\mu$, $\sigma^2$, and $\rho(k)$.}
	\label{fig:ar_test}
\end{figure}    

\subsection{Kuramoto-Sivashinsky equation}\label{sec:KS}

We next apply the new UQ method to a dataset derived from a simulation of the Kuromoto-Sivashinsky (KS) equation,
\begin{equation}\label{eq:KS}
u_t+u_{xxxx}+u_{xx}+u\,u_x=0 \quad \textrm{for} \quad 0\le x<L,
\end{equation}
with periodic boundary conditions $u(0)=u(L)$.  The statistic we consider in this work is the spatially-averaged value of the energy, defined as
\begin{equation}
e(t)=\frac{1}{L} \int_0^{L} u^2(x,t)\,dx.  
\end{equation}
The KS PDE is simulated in this work using a dealiased pseudospectral method for computing spatial derivatives, and a low-storage Implicit-Explicit Runge-Kutta (IMEXRK) scheme \cite{cavaglieri-2015, cavaglieri-2013} for marching in time. 

In the simulations reported here, we take $L=200$, $N_x=512$, $\Delta x=L/N_x\approx 0.391$, and $\Delta t=0.2$.  The initial field is taken as 
\begin{equation}
u(x,0)=\sin(0.5\, \pi\, \,x ) + \sin(0.85\, \pi \,x ) + 0.2\, v, \quad v=\mathcal{N}(0,1). 
\end{equation}  
After the initial transient has passed, the KS system defined above approaches a chaotic attractor, as indicated in Figure \ref{fig:transtime}a.

The transient identification method reviewed in \S \ref{sec:trans} is again implemented to detect and set aside the initial transient in the dataset.  A typical estimate of the transient time is illustrated in Figure \ref{fig:transtime}; after setting aside this initial transient, the remainder of the dataset appears to be approximately statistically stationary.

\begin{figure*}
			\centering
\includegraphics[width=0.7\linewidth]{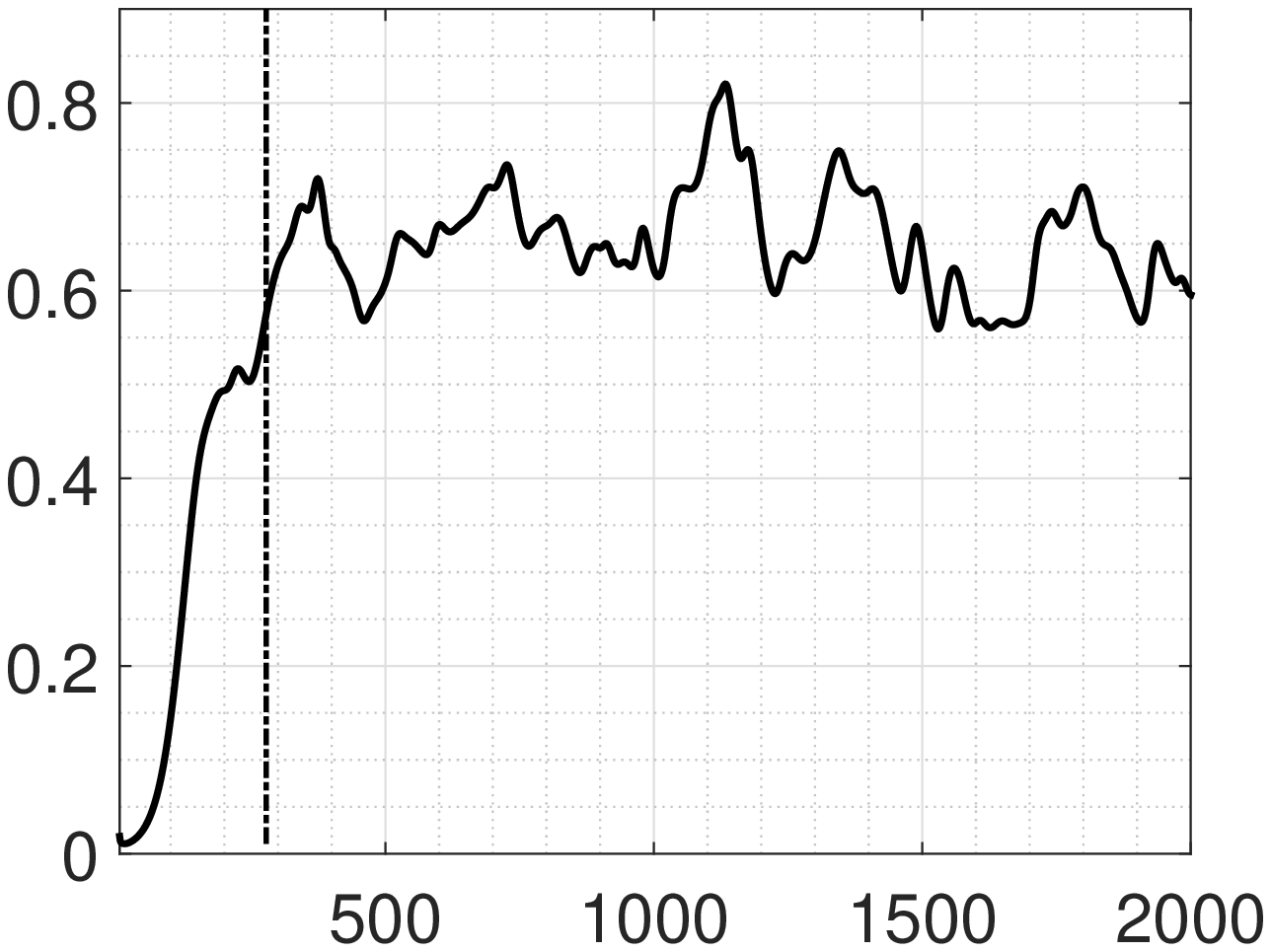} 
	
	\caption{Evolution of (solid line) the kinetic energy in a simulation of the KS model described in \S \ref{sec:KS} versus the number of timesteps taken, including (vertical dashed line) the identification of the initial transient.} 
	\label{fig:transtime}
    
	\centering
	\includegraphics[width=0.7\columnwidth]{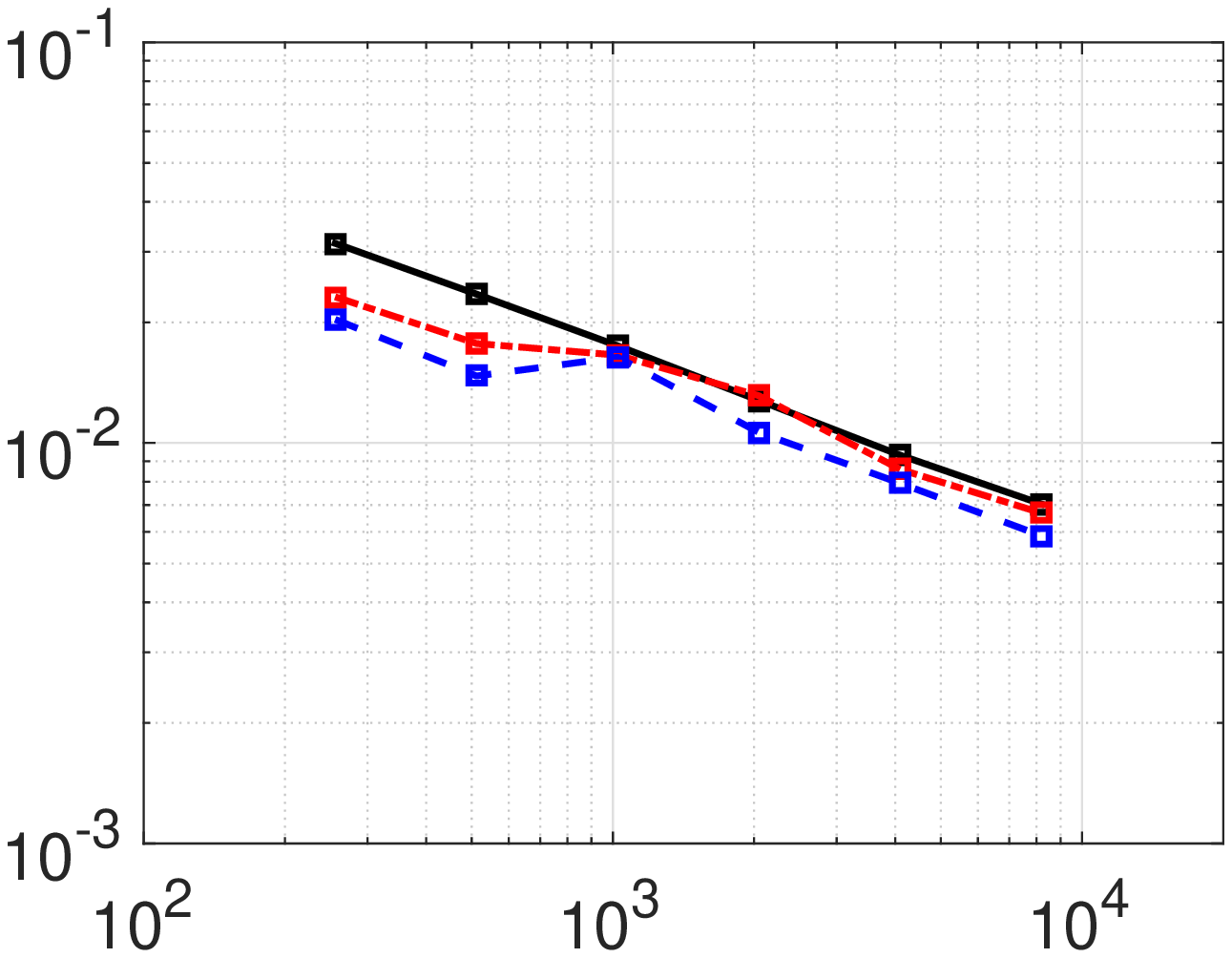}\vskip-0.01in
	\caption{Implementation of Algorithm \ref{algorithm:uq} and the MLE-based UQ approach on a single simulation of the KS model described in \S \ref{sec:KS}.  Horizontal axis is the number of samples taken, with one sample per timestep.  Line types are identical to those in Figure \ref{fig:ar_test}, with the error bars removed because only a single simulation is shown.  Note that accurate estimates, not exact values, are used for $\mu$, $\sigma^2$, and $\rho(k)$ when computing the actual averaging error.}
	\label{fig:kuromoto}
\end{figure*}


Sampling every timestep after the initial transient (see Figure \ref{fig:transtime}) is set aside, the averaging error was computed with realization lengths of $N=\{2^8: 2^{13}\}=\{256:8192\}$, incrementing by powers of two.  Note that, since we take $\Delta T=0.2$, this is equivalent to taking $T=0.2*N=\{50:3268\}$ time units of the original KS equation.  Again, the averaging error is estimated using Algorithm \ref{algorithm:uq} and the MLE-based approach, with an AR(3) model incorporated.  We also compare with the expected squared averaging error given by \eqref{eq:sigmaTrhok}, with the accurate values for $\mu$, $\sigma$, and $\rho$ as determined from a simulation 30 times longer than the the longest simulation reported here.

As observed in Figure \ref{fig:kuromoto}, the performance of the UQ method developed here is significantly improved as compared with the MLE-based approach, especially as the number of samples is increased.

\subsection{Navier-stokes equations}\label{sec:NS}

Finally, we apply the UQ method to a dataset generated by a DNS of a low Reynolds number incompressible 3D turbulent channel flow (see, e.g., \cite{moin1998direct,moser1999direct}).
Periodic boundary conditions are applied in the streamwise direction, $x$, and the spanwise direction, $z$; homogeneous Dirichlet boundary conditions on the velocity are applied at the walls in the wall-normal direction, $y$.

Following \cite{kim1987turbulence}, the incompressible Navier-Stokes equation is implemented in a 2-variable formulation of the wall-normal components of velocity and vorticity (other velocity, vorticity, and pressure components may be computed from these two components as needed).  The simulation, which used the code developed in \cite{luchini2006low}, used a dealiased pseudospectral method for computing spatial derivatives in the $x$ and $z$ directions, and the compact finite difference method \cite{kwok2001critical} for computing spatial derivatives in the $y$ direction.  The CN/RKW3 method \cite{kim2003control} was used for time integration.  

In the simulations reported here, we consider a spatial domain with
$0\le x \le 2 \pi$,  $0 \le y \le 2$, and $0 \le z \le 2 \pi$,
a grid of $N_x=128$, $N_y=64$, and $N_z=128$, Reynolds number $Re_{\tau}=180$, and timesteps of $\Delta t=0.01$.
The simulation is performed for $N=10^5$ timesteps, 
and the statistic that is analyzed is turbulent kinetic energy (TKE).

As indicated in Figure \ref{fig:transient_ns}, the transient identification process is completely analogous to that in the KS case.  

Sampling every timestep after the initial transient (see Figure \ref{fig:transient_ns}) is set aside, the averaging error was computed with realization lengths of $N=\{2^9: 2^{12}\}=\{512:4096\}$, incrementing by powers of two.  Note that, since we take $\Delta T=0.01$, this is equivalent to taking $T=0.01*N=\{5.12:40.96\}$ time units of the original NS equation.  Again, the averaging error is estimated using Algorithm \ref{algorithm:uq} and the MLE-based approach, with an AR(18) model incorporated.  We also compare with the expected squared averaging error given by \eqref{eq:sigmaTrhok}, with the accurate values for $\mu$, $\sigma$, and $\rho$ as determined from a simulation much longer than the the longest simulation reported here.




Again, as observed in Figure \ref{fig:ns_averag}, the performance of the UQ method developed here is seen to be significantly improved as compared with the MLE-based approach, especially as the number of samples is increased.


\begin{figure*}[t!]
			\centering
\includegraphics[width=0.7\linewidth]{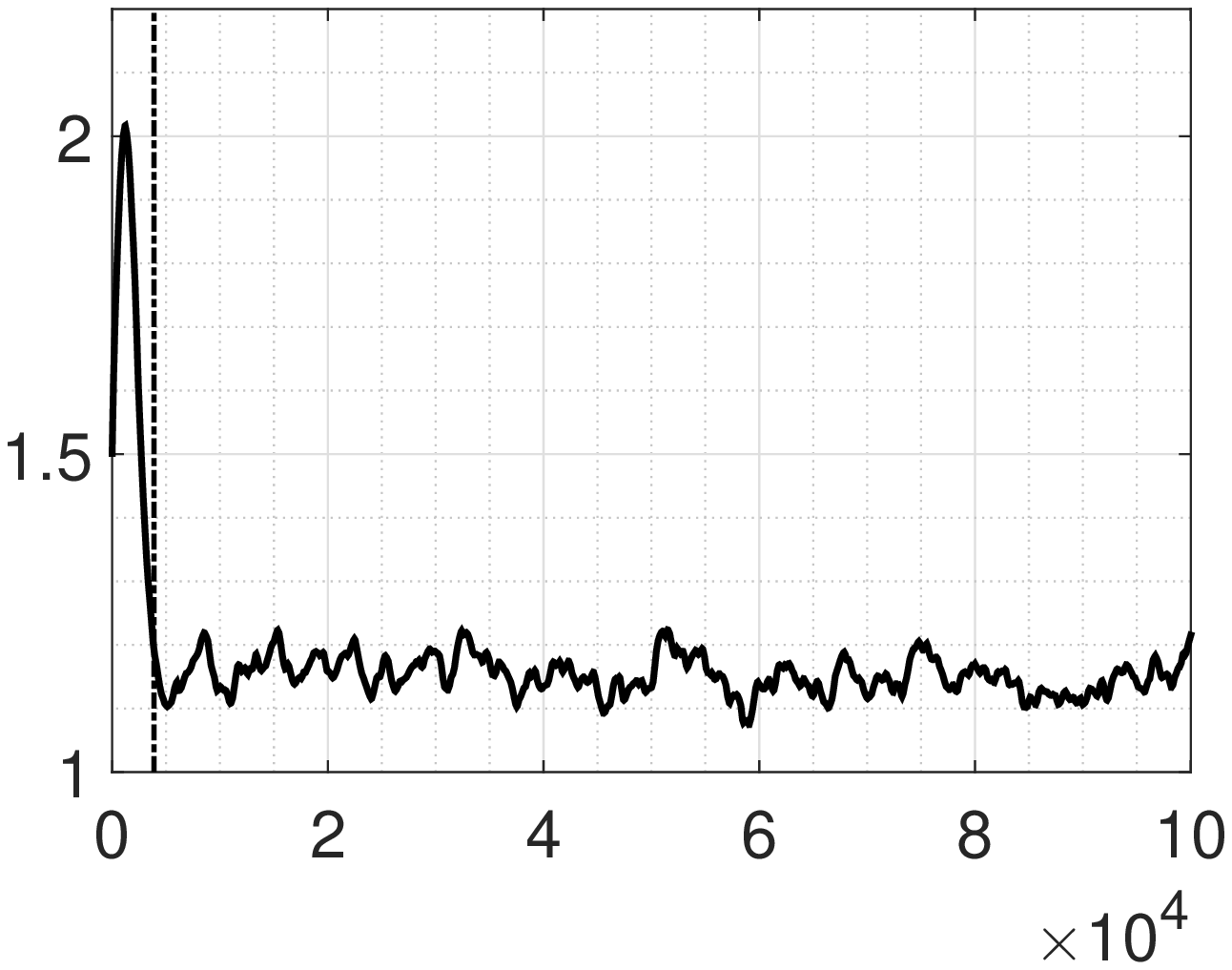} 
	\caption{Evolution of (solid line) the turbulent kinetic energy in a simulation of the $Re_\tau=180$ channel flow model described in \S \ref{sec:NS} versus the number of timesteps taken, including (vertical dashed line) the identification of the initial transient.} 
	\label{fig:transient_ns}
	\centering
		\includegraphics[width=.7\columnwidth]{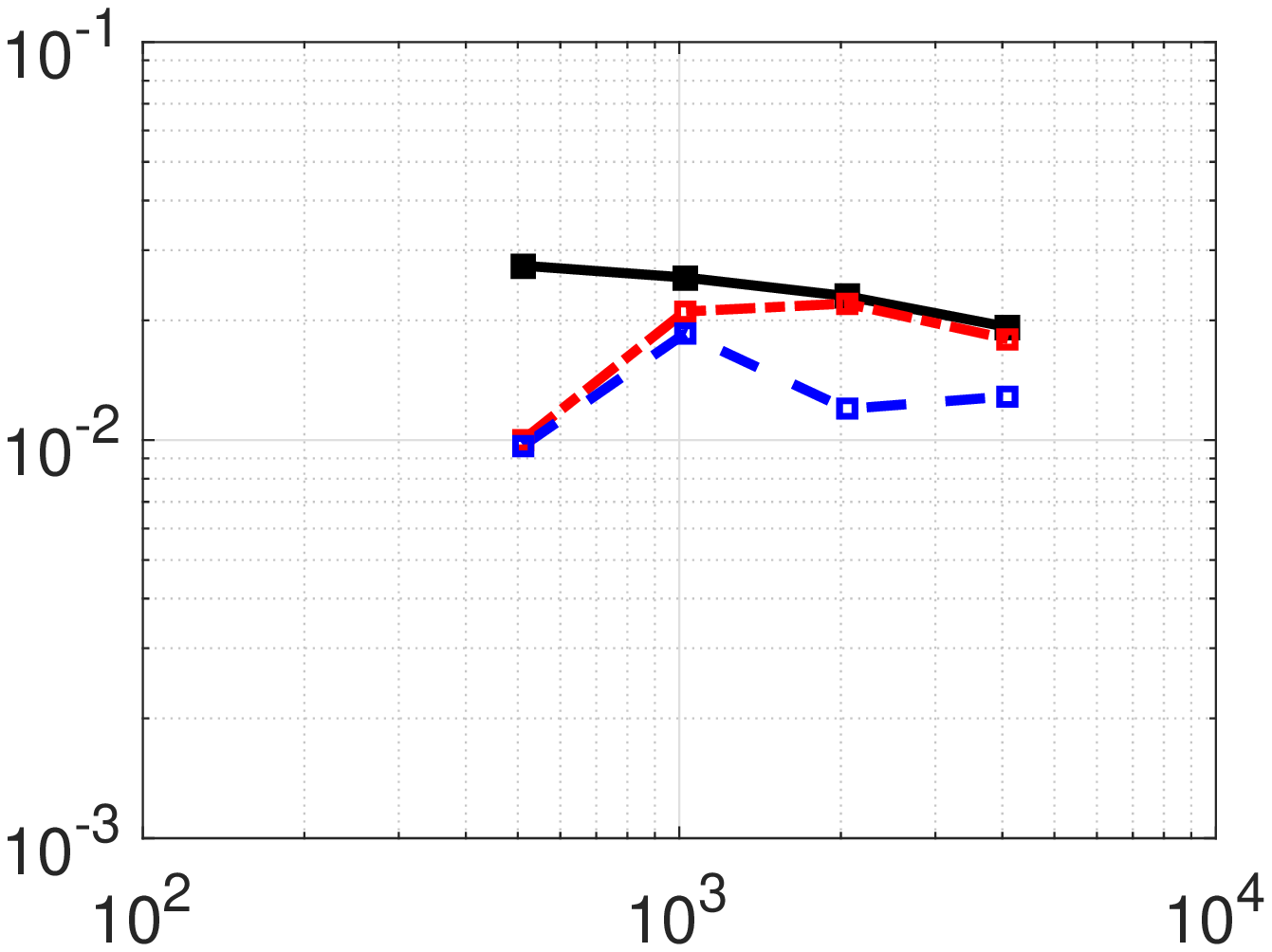}\vskip-0.01in
	\caption{Implementation of Algorithm \ref{algorithm:uq} and the MLE-based UQ approach on a single simulation of the $Re_\tau=180$ channel flow model described in \S \ref{sec:NS}.  Horizontal axis is the number of samples taken, with one sample per timestep.  Line types are identical to those in Figures \ref{fig:kuromoto} and \ref{fig:ar_test}.}
	\label{fig:ns_averag}
\end{figure*}

\section{Conclusions}

A new approach has been developed to quantify the uncertainty associated with finite-time-average approximations of infinite-time-average statistics of statistically stationary ergodic processes.  For applications of this new UQ approach that are derived from continuous-time chaotic systems like turbulent flows, an adequate sampling interval $h$ is identified in \eqref{eq:optimal_h}, and an effective method for removing the initial transient from the dataset is reviewed in \S \ref{sec:trans}.  A companion paper \cite{beyhaghi-alpha} illustrates how an effective UQ approach of this sort can be directly leveraged for maximally-efficient derivative-free optimization of infinite-time-averaged statistics of chaotic systems which depend upon a handful of adjustable parameters.

The new UQ method is presented in \S \ref{sec:met} and analyzed mathematically in \S \ref{sec:analysis}. This analysis reveals that, for long simulations, the UQ so determined is asymptotically unbiased; this important property is not guaranteed by various competing UQ methods, such as the leading method developed in \cite{beran-1994}, which based on a maximum likelihood formulation.

The new UQ method is tested in \S \ref{sec:result} on datasets generated by an AR(6) process, by the Kuramoto-Sivashinsky equation, and by a low-Reynolds number turbulent channel flow DNS.  Results are compared with both the leading UQ approach developed in \cite{beran-1994}, as well as the expected deviation of the sample mean $y_s$ from the true mean $\mu$, as quantified by \eqref{eq:sigmaTrhok}, based on accurate values of the true mean $\mu$, the variance $\sigma^2$, and the autocorrelation $\rho(k)$. 

It is observed that the method developed here has a significant improvement from that provided by the approach in \cite{beran-1994}, especially as the realization length $N$ is increased. An open source, Python implementation for this time averaging uncertainty quantification and its transient detector are available at  \url{https://github.com/salimoha/uq.git}. 

\section*{Acknowledgment}
The authors gratefully acknowledge Prof. Juan Carlos Del Alamo, Prof. Paulo Luchini, and Prof. Dariush Divsalar for their assistance, and funding from AFOSR FA 9550-12-1-0046, Cymer Center for Control Systems \& Dynamics, and Leidos corporation in support of this work. 

\bibliographystyle{siam}

\bibliography{ref}

\begin{thebibliography}{10}

\bibitem{alimo-2016}
{\sc S.~Alimohammadi and D.~He}, {\em Multi-stage algorithm for uncertainty
  analysis of solar power forecasting}, in IEEE Power \& Energy Society,
  General Meeting Conference, 2016.

\bibitem{awad-2006}
{\sc H.~P. Awad and P.~W. Glynn}, {\em On an initial transient deletion rule
  with rigorous theoretical support}, in Proceedings of the 38th conference on
  Winter simulation, Winter Simulation Conference, 2006, pp.~186--191.

\bibitem{beran-1994}
{\sc J.~Beran}, {\em Statistics for long-memory processes}, vol.~61, CRC Press,
  1994.

\bibitem{bernardes-2010}
{\sc M.~Bernardes and N.~Dias}, {\em The alignment of the mean wind and stress
  vectors in the unstable surface layer}, Boundary-layer meteorology, 134
  (2010), pp.~41--59.

\bibitem{beyhaghi-alpha}
{\sc P.~Beyhaghi and T.~Bewley}, {\em A derivative-free optimization algorithm
  for the efficient minimization of time-averaged statistics}.
\newblock Submitted.

\bibitem{beyhaghi-2016}
{\sc P.~Beyhaghi and T.~Bewley}, {\em Delaunay-based derivative-free
  optimization via global surrogates, part {II}: Convex constraints}, Journal
  of Global Optimization,  (2016), pp.~1--33.

\bibitem{beyhaghi-2015}
{\sc P.~Beyhaghi, D.~Cavaglieri, and T.~Bewley}, {\em Delaunay-based
  derivative-free optimization via global surrogates, part {I}: linear
  constraints}, Journal of Global Optimization,  (2015), pp.~1--52.

\bibitem{cavaglieri-2015}
{\sc D.~Cavaglieri and T.~Bewley}, {\em Low-storage implicit/explicit
  runge--kutta schemes for the simulation of stiff high-dimensional ode
  systems}, Journal of Computational Physics, 286 (2015), pp.~172--193.

\bibitem{cavaglieri-2013}
{\sc D.~Cavaglieri, P.~Beyhaghi, and T.~Bewley}, {\em Low-storage imex
  runge-kutta schemes for the simulation of navier-stokes systems}, in 21st
  AIAA computational fluid dynamics conference, San Diego, CA, 2013.

\bibitem{Dias-2004}
{\sc N.~L. Dias, M.~Chamecki, A.~Kan, and C.~M. Okawa}, {\em A study of
  spectra, structure and correlation functions and their implications for the
  stationarity of surface-layer turbulence}, Boundary-layer meteorology, 110
  (2004), pp.~165--189.

\bibitem{gill-2005}
{\sc P.~E. Gill, W.~Murray, and M.~A. Saunders}, {\em Snopt: An sqp algorithm
  for large-scale constrained optimization}, SIAM review, 47 (2005),
  pp.~99--131.

\bibitem{gluhovsky-1994}
{\sc A.~Gluhovsky and E.~Agee}, {\em A definitive approach to turbulence
  statistical studies in planetary boundary layers}, Journal of the atmospheric
  sciences, 51 (1994), pp.~1682--1690.

\bibitem{hoad-2008}
{\sc K.~Hoad, S.~Robinson, and R.~Davies}, {\em Automating warm-up length
  estimation}, in Proceedings of the 40th Conference on Winter Simulation,
  Winter Simulation Conference, 2008, pp.~532--540.

\bibitem{hosking-1981}
{\sc J.~R. Hosking}, {\em Fractional differencing}, Biometrika, 68 (1981),
  pp.~165--176.

\bibitem{kim2003control}
{\sc J.~Kim}, {\em Control of turbulent boundary layers}, Physics of Fluids
  (1994-present), 15 (2003), pp.~1093--1105.

\bibitem{kim1987turbulence}
{\sc J.~Kim, P.~Moin, and R.~Moser}, {\em Turbulence statistics in fully
  developed channel flow at low reynolds number}, Journal of fluid mechanics,
  177 (1987), pp.~133--166.

\bibitem{kwok2001critical}
{\sc W.~Y. Kwok, R.~D. Moser, and J.~Jim{\'e}nez}, {\em A critical evaluation
  of the resolution properties of b-spline and compact finite difference
  methods}, Journal of Computational Physics, 174 (2001), pp.~510--551.

\bibitem{lada-2006}
{\sc E.~K. Lada, N.~M. Steiger, and J.~R. Wilson}, {\em Performance evaluation
  of recent procedures for steady-state simulation analysis}, IIE Transactions,
  38 (2006), pp.~711--727.

\bibitem{luchini2006low}
{\sc P.~Luchini and M.~Quadrio}, {\em A low-cost parallel implementation of
  direct numerical simulation of wall turbulence}, Journal of Computational
  Physics, 211 (2006), pp.~551--571.

\bibitem{lumley-1964}
{\sc J.~L. Lumley and H.~A. Panofsky}, {\em The structure of atmospheric
  turbulence},  (1964).

\bibitem{mardsen2004}
{\sc A.~L. Marsden, M.~Wang, J.~E. Dennis~Jr, and P.~Moin}, {\em Optimal
  aeroacoustic shape design using the surrogate management framework},
  Optimization and Engineering, 5 (2004), pp.~235--262.

\bibitem{moin1998direct}
{\sc P.~Moin and K.~Mahesh}, {\em Direct numerical simulation: a tool in
  turbulence research}, Annual review of fluid mechanics, 30 (1998),
  pp.~539--578.

\bibitem{mokashi-2010}
{\sc A.~C. Mokashi, J.~J. Tejada, S.~Yousefi, A.~Tafazzoli, T.~Xu, J.~R.
  Wilson, and N.~M. Steiger}, {\em Performance comparison of mser-5 and n-skart
  on the simulation start-up problem}, in Proceedings of the Winter Simulation
  Conference, Winter Simulation Conference, 2010, pp.~971--982.

\bibitem{moser1999direct}
{\sc R.~D. Moser, J.~Kim, and N.~N. Mansour}, {\em Direct numerical simulation
  of turbulent channel flow up to re= 590}, Phys. Fluids, 11 (1999),
  pp.~943--945.

\bibitem{nason-2013}
{\sc G.~Nason}, {\em A test for second-order stationarity and approximate
  confidence intervals for localized autocovariances for locally stationary
  time series}, Journal of the Royal Statistical Society: Series B (Statistical
  Methodology), 75 (2013), pp.~879--904.

\bibitem{moser-2014}
{\sc T.~A. Oliver, N.~Malaya, R.~Ulerich, and R.~D. Moser}, {\em Estimating
  uncertainties in statistics computed from direct numerical simulation},
  Physics of Fluids (1994-present), 26 (2014), p.~035101.

\bibitem{politis-2004}
{\sc D.~N. Politis and H.~White}, {\em Automatic block-length selection for the
  dependent bootstrap}, Econometric Reviews, 23 (2004), pp.~53--70.

\bibitem{priestley-1969}
{\sc M.~Priestley and T.~S. Rao}, {\em A test for non-stationarity of
  time-series}, Journal of the Royal Statistical Society. Series B
  (Methodological),  (1969), pp.~140--149.

\bibitem{robinson-2002}
{\sc S.~Robinson}, {\em New simulation output analysis techniques: a
  statistical process control approach for estimating the warm-up period}, in
  Proceedings of the 34th conference on Winter simulation: exploring new
  frontiers, Winter Simulation Conference, 2002, pp.~439--446.

\bibitem{salesky-2012}
{\sc S.~T. Salesky, M.~Chamecki, and N.~L. Dias}, {\em Estimating the random
  error in eddy-covariance based fluxes and other turbulence statistics: the
  filtering method}, Boundary-layer meteorology, 144 (2012), pp.~113--135.

\bibitem{sreenivasan-1978}
{\sc K.~Sreenivasan, A.~Chambers, and R.~Antonia}, {\em Accuracy of moments of
  velocity and scalar fluctuations in the atmospheric surface layer},
  Boundary-Layer Meteorology, 14 (1978), pp.~341--359.

\bibitem{suarez-2002}
{\sc A.~Suarez-Gonz{\'a}lez, J.~C. L{\'o}pez-Ardao, C.~Lopez-Garc{\'\i}a,
  M.~Rodr{\'\i}guez-P{\'e}rez, M.~Fern{\'a}ndez-Veiga, and M.~E. Sousa-Vieira},
  {\em New simulation output analysis techniques: a batch means procedure for
  mean value estimation of processes exhibiting long range dependence}, in
  Proceedings of the 34th conference on Winter simulation: exploring new
  frontiers, Winter Simulation Conference, 2002, pp.~456--464.

\bibitem{tafazzoli-2011}
{\sc A.~Tafazzoli and J.~R. Wilson}, {\em N-skart: A nonsequential skewness-and
  autoregression-adjusted batch-means procedure for simulation analysis}, in
  Winter Simulation Conference, Winter Simulation Conference, 2009,
  pp.~652--662.

\bibitem{theunissen-2008}
{\sc R.~Theunissen, A.~Di~Sante, M.~Riethmuller, and R.~Van~den Braembussche},
  {\em Confidence estimation using dependent circular block bootstrapping:
  application to the statistical analysis of piv measurements}, Experiments in
  Fluids, 44 (2008), pp.~591--596.

\bibitem{White-1997}
{\sc K.~P. White}, {\em An effective truncation heuristic for bias reduction in
  simulation output}, Simulation, 69 (1997), pp.~323--334.

\bibitem{White-2000}
{\sc K.~P. White~Jr, M.~J. Cobb, and S.~C. Spratt}, {\em A comparison of five
  steady-state truncation heuristics for simulation}, in Proceedings of the
  32nd conference on Winter simulation, Society for Computer Simulation
  International, 2000, pp.~755--760.

\bibitem{wynngaard-1973}
{\sc J.~Wyngaard and O.~Cot{\'e}}, {\em The budgets of turbulent kinetic energy
  and temperature variance in the atmospheric surface layer}, Journal of the
  Atmospheric Sciences, 28 (1971), pp.~190--201.

\bibitem{yaglom-1987}
{\sc A.~M. Yaglom}, {\em Correlation theory of stationary and related random
  functions}, Springer, 1987.

\end{thebibliography}
\end{document}